\documentclass[a4paper]{amsart}

\title{Trefoils and hexafoils in 3-braids}
\author{Sebastian Baader, Levi Ryffel}
\date{\today}

\usepackage{amsmath}
\usepackage{amsfonts}
\usepackage{amssymb}
\usepackage{amsthm}

\usepackage{enumerate}
\usepackage{graphicx}
\usepackage[hidelinks]{hyperref}

\newtheorem{theorem}{Theorem}

\newtheorem{lemma}{Lemma}

\theoremstyle{definition}

\begin{document}
\maketitle

\begin{abstract}
We determine the cobordism distance between all positive 3-braid links and connected sums of trefoil knots, up to a constant error. This may be seen as a reinterpretation of Coxeter's finiteness result on the braid group on three strands modulo third powers of the generators. We go on to explore the limits of Coxeter's result, and find the cobordism distance between three strand torus links and almost all connected sums of two strand torus links, up to a constant error.
\end{abstract}

\section{Introduction}

In the late 50's, Coxeter proved that the quotient of the braid group $B_3$ by the $k$-th power of the generators is finite, provided $k \leq 5$, and infinite otherwise~\cite{coxeter}. An important step in the proof is to reduce large powers of the central element $(\sigma_1 \sigma_2)^3 \in B_3$ to the trivial braid, by successively removing instances of $\sigma_1^k$ or $\sigma_2^k$. As we will see, the same technique allows us to find virtually minimal genus cobordisms between all positive torus links of type $T(3,m)$ and connected sums of torus links of type $T(2,k)$, for $k \leq 5$. Surprisingly, the argument extends to the case $k=6$, despite the fact that the centre of $B_3$ modulo sixth powers of the generators is infinite. In order to state our results, we need the cobordism distance $d_\chi(K,L)$ of oriented links $K,L \subset S^3$, defined as the absolute value of the maximal Euler characteristic among all smooth cobordisms in $S^3 \times [0,1]$ without closed components between $K \subset S^3 \times\{0\}$ and $L \subset S^3 \times\{1\}$~\cite{baader12}. The links we consider are closures of positive 3-braids. We use the notation $B_3^+$ for the monoid of positive 3-braids, and $\widehat{\beta} \subset S^3$ for the standard closure of positive 3-braids $\beta \in B_3^+$.

\begin{theorem}\label{thm:3-braids}
  There exists a constant $C_3 \leq 18$ such that
  for all non-split positive braids $\beta \in B_3^+$ of length $\ell(\beta)$ and all $n \in \mathbb{N}$ we have
  \[
  d_\chi(\widehat \beta, T(2, 3)^n)=\frac{\ell(\beta)}{3}+2\left|n-\frac{\ell(\beta)}{3}\right|+E_3(\beta, n),
  \]
  with an error term $E_3(\beta, n)$ satisfying $|E_3(\beta, n)| \leq C_3$.
\end{theorem}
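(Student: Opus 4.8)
The plan is to prove a matching lower bound and upper bound for $d_\chi(\widehat\beta, T(2,3)^n)$, each valid up to a universal additive constant, and then read off $C_3$. For the lower bound I would combine two obstructions. The first is the slice Euler characteristic: a non-split positive $3$-braid closure bounds the fibre surface of Euler characteristic $3-\ell(\beta)$, which is of maximal Euler characteristic already in the $4$-ball, so $d_\chi(\widehat\beta,U)=\ell(\beta)-2$ for the unknot $U$, while $d_\chi(T(2,3)^n,U)=2n$; the triangle inequality then gives $d_\chi(\widehat\beta,T(2,3)^n)\geq|\ell(\beta)-2-2n|$, which already matches the asserted value when $n\leq\ell(\beta)/3$. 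The second obstruction is the Levine--Tristram signature at the primitive sixth root of unity $\omega=e^{i\pi/3}$: taking the one-sided value just past $\omega$ one has $\sigma_\omega(T(2,3)^n)=-2n$, and the crucial input is the estimate
\[
\sigma_\omega(\widehat\beta)\ \geq\ -\tfrac{\ell(\beta)}{3}-O(1)\qquad\text{for every non-split }\beta\in B_3^+ .
\]
Granting this, $d_\chi(\widehat\beta,T(2,3)^n)\geq|\sigma_\omega(\widehat\beta)-\sigma_\omega(T(2,3)^n)|\geq 2n-\tfrac{\ell(\beta)}{3}-O(1)$, which covers the range $n\geq\ell(\beta)/3$; together the two bounds give the lower estimate for all $n$.

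For the upper bound the key local move is a single band move realising
\[
d_\chi\bigl(\widehat{w_1\sigma_i^3 w_2},\ \widehat{w_1 w_2}\#\, T(2,3)\bigr)\ \leq\ 1 ,
\]
so that deleting or inserting a cube of a generator exchanges one connected-sum trefoil for one unit of cobordism cost (and, in bulk, $\sigma_i^{3k}$ for $k$ trefoils at cost $k$). Coxeter's finiteness theorem for $B_3$ modulo third powers of the generators provides, for each $\beta\in B_3^+$, a reduction of $\beta$ to a word of bounded length by braid relations together with insertions and removals of cubes; bookkeeping of word length forces the net number of removals to be $\tfrac{\ell(\beta)}{3}+O(1)$, and the point is that Coxeter's reduction --- which on the central element proceeds by successively stripping off $\sigma_1^3$ or $\sigma_2^3$ --- can be organised so that the number of insertions is $O(1)$, uniformly in $\beta$; for the ``rigid'' positive braids that contain no cube even after braid relations (e.g.\ $\beta=(\sigma_1^2\sigma_2^2)^m$) one falls back on Murasugi's classification of $3$-braids together with the relation $(\sigma_1\sigma_2)^6\equiv 1$ modulo cubes. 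Collecting the trefoil summands produced this way yields $d_\chi\bigl(\widehat\beta,\,T(2,3)^{\lfloor\ell(\beta)/3\rfloor}\bigr)\leq\tfrac{\ell(\beta)}{3}+O(1)$, and since $d_\chi(T(2,3)^a,T(2,3)^b)=2|a-b|$ (add or remove a trefoil summand at cost $2$, sharp by signature) the triangle inequality upgrades this to $d_\chi(\widehat\beta,T(2,3)^n)\leq\tfrac{\ell(\beta)}{3}+2\bigl|n-\tfrac{\ell(\beta)}{3}\bigr|+O(1)$ for all $n$.

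Combining the two estimates gives the stated equality; tracking the constants entering the band move, the Coxeter reduction, and the signature estimate then produces the explicit value $C_3\leq 18$. I expect the two uniform estimates to be where the real work lies. On the lower-bound side, the inequality $\sigma_{e^{i\pi/3}}(\widehat\beta)\geq-\ell(\beta)/3-O(1)$ cannot follow merely from $\sigma_\omega$ being a quasimorphism on $B_3$; it seems to require analysing the Seifert form of an arbitrary positive $3$-braid at $\omega=e^{i\pi/3}$, where the coincidence $|1-e^{i\pi/3}|=1$ (equivalently, the generators act by finite-order matrices in the corresponding Burau specialisation) makes the signature grow essentially linearly in word length on the positive monoid. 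On the upper-bound side, the delicate point is to make Coxeter's reduction quantitative --- bounding the number of cube-insertions independently of $\beta$, and checking that the rigid cases do not cost more than $\ell(\beta)/3+O(1)$ --- which is where a case analysis based on Murasugi's normal form for positive $3$-braids enters.
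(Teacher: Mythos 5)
Your overall strategy coincides with the paper's: the lower bound combines the Euler-characteristic (slice--Bennequin) obstruction with the Levine--Tristram signature just past $\zeta_6$, and the upper bound removes cubes of generators by band moves, each removal splitting off a trefoil summand at cost one, after which the number of trefoil summands is adjusted at cost two each. Your triangle-inequality-through-the-unknot version of the first obstruction is equivalent to the paper's use of slice--Bennequin, and the signature input you ask for, $|\sigma_\omega(\widehat\beta)-\ell(\beta)/3|=O(1)$ for all positive $3$-braids at $\omega$ near $e^{\pi i/3}$, is not something you need to re-derive from the Burau specialisation: it is exactly Corollary~4.4 of Gambaudo--Ghys, quoted as Lemma~\ref{lem:gambaudo-ghys} in the paper. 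So the lower-bound half of your outline is sound once that reference is supplied.

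The genuine gap is in the upper bound. You assert that Coxeter's reduction modulo cubes ``can be organised so that the number of insertions is $O(1)$, uniformly in $\beta$,'' and for the rigid case $(\sigma_1^2\sigma_2^2)^m$ you appeal to Murasugi's classification together with $(\sigma_1\sigma_2)^6\equiv 1$ modulo cubes. This is precisely the statement that has to be proved, and it does not follow from the finiteness of $B_3/\langle\langle\sigma_i^3\rangle\rangle$: finiteness is a group-theoretic fact, while the cobordism construction needs cubes appearing as honest subwords of a \emph{positive} word, and a priori the group reduction could require unboundedly many insertions or inverse letters, which have no cheap cobordism interpretation. Moreover $(\sigma_1^2\sigma_2^2)^m$ is not conjugate to a power of $\sigma_1\sigma_2$, so the relation $(\sigma_1\sigma_2)^6\equiv1$ does not act on it directly; the actual mechanism (Section~\ref{sec:finding-subsurfaces} of the paper) is to delete a \emph{single} generator from $(a^2b^2)^\ell$, creating a factor $\Delta^2$ which can be pushed aside so that cube removals cascade and convert the whole braid into a power of $\Delta$, which is then annihilated via $(ab)^6=bab^3ab^3ab^2$ and $(ab)^4=bab^3ab^2$; in total at most $10$ single-letter deletions suffice, uniformly in $\beta$. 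Without this (or an equivalent quantitative reduction) your claim $d_\chi\bigl(\widehat\beta,T(2,3)^{\lfloor\ell(\beta)/3\rfloor}\bigr)\le\ell(\beta)/3+O(1)$ is unsupported. Finally, the statement demands the explicit bound $C_3\le18$, which requires tracking the concrete constants ($10$ deletions, the $+2$ from Lemma~\ref{lem:delete-power}, and the slack from maximality of the number of removed cubes); your outline defers this entirely, so as written it proves at best the formula with an unspecified constant.
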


For our second result, we need to specify the meaning of the iterated connected sum of torus links $T(2,2k)^n$; our convention is that all summands are attached to one distinguished link component, as in the well-defined connected sum of knots $T(2,3)^n$. In particular, our version of the connected sum $T(2,2k)^n$ is not the same as the closure of the braid $\sigma_1^{2k} \sigma_2^{2k} \cdots \sigma_n^{2k}$.

\begin{theorem}\label{thm:3-torus-knots}
  There exist constants
  $C_4, C_5, C_6 < 44$ such that
  for all $m \geq 1$, all $n \geq 0$,
  and all $k \in \{4, 5, 6\}$
  we have
  \[
    d_\chi(T(3, m), T(2, k)^n) = \frac{2m}{k} + (k-1) \left|n - \frac{2m}{k} \right|
    + E_k(m, n)
  \]
  with an error term $E_k(m, n)$ satisfying
  $|E_k(m, n)| \leq C_k$.
\end{theorem}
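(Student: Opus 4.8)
The plan is to estimate $d_\chi(T(3,m), T(2,k)^n)$ from above and below separately, and then check that the two bounds differ only by a universal constant. For the upper bound I would build an explicit cobordism in two stages. First, using Coxeter's reduction technique -- successively deleting occurrences of $\sigma_1^k$ or $\sigma_2^k$ from a positive 3-braid word, each such deletion being realized by a band move (a cobordism of Euler characteristic $-1$ that removes $k$ crossings) -- I would reduce the torus braid $(\sigma_1\sigma_2)^m$ (or rather a power of the full twist presenting $T(3,m)$) to a short braid of controlled length. The key input is that modulo $k$-th powers the centre of $B_3$ is finite for $k\le 5$, and for $k=6$ one still has enough control (this is the ``surprising'' extension mentioned in the introduction) to reduce $(\sigma_1\sigma_2)^{3}$-powers down to a bounded remainder. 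Counting crossings: $T(3,m)$ has $2m$ crossings, removing a $\sigma_i^k$ costs one band and kills $k$ crossings, so roughly $2m/k$ bands bring us near the trivial braid or near a short fixed braid, contributing the $2m/k$ term. Second, to pass between the resulting short link and $T(2,k)^n$ I would use the standard cobordism between $T(2,k)$ and $T(2,k')$ which has Euler characteristic defect $|k-k'|$ per summand, together with the observation that each connected-sum operation and each unknotting of a small leftover piece costs $O(1)$; stacking $n$ copies of $T(2,k)$ onto a single component and matching against whatever short braid remains yields the $(k-1)|n-2m/k|$ term (the factor $k-1 = 1 - \chi(T(2,k)) $ being the cost of creating or destroying one $T(2,k)$ summand from scratch).

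For the lower bound I would use a cobordism-distance estimate coming from a knot concordance invariant that is additive under connected sum, behaves predictably on torus links, and is Lipschitz with respect to $d_\chi$ -- the natural candidates being the signature, or better the slice-torus invariants ($\tau$, $s/2$, $\Upsilon$). Since $T(3,m)$ and the $T(2,k)^n$ are positive (hence their $\tau$, $s/2$ equal their genus, computable from Seifert's algorithm on the positive braid), one gets $|\nu(T(3,m)) - \nu(T(2,k)^n)| \le d_\chi$ for each such $\nu$, and $\nu(T(3,m))$ grows like $m$ while $\nu(T(2,k)^n)$ grows like $n(k-1)$; choosing $\nu$ appropriately (e.g. combining the value of one slice-torus invariant with a second, differently-normalized one, or using $\Upsilon$ at two values of its parameter) separates the two regimes $n \lessgtr 2m/k$ and reproduces both the $2m/k$ and the $(k-1)|n-2m/k|$ contributions up to a constant. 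Here Theorem~\ref{thm:3-braids} can be invoked directly: since $T(3,m) = \widehat{(\sigma_1\sigma_2)^m}$ is the closure of a positive 3-braid of length $2m$, we already know $d_\chi(T(3,m), T(2,3)^N)$ precisely up to $C_3$, and it remains only to compare $T(2,3)^N$ with $T(2,k)^n$, a connected-sum-of-2-braids computation that is exactly solvable via signatures up to a bounded error.

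The main obstacle I anticipate is the case $k=6$. For $k\le 5$ Coxeter's theorem gives outright finiteness of $B_3/\langle\langle \sigma_i^k\rangle\rangle$, so every positive 3-braid reduces, modulo a bounded-size finite set of residues, to a word whose length is controlled linearly -- the reduction terminates cleanly. For $k=6$ the quotient is infinite and even its centre is infinite, so one cannot simply reduce to a finite list; instead one must exhibit by hand a tailored sequence of band moves that eats a full twist $(\sigma_1\sigma_2)^6$ (or some fixed power thereof) at the optimal crossing-to-band ratio $6:?$, verifying that no crossings are ``wasted'' beyond a constant. Getting this bookkeeping tight enough that the leftover error stays below the claimed bound $44$, while simultaneously making sure the lower-bound invariant is genuinely sharp for $T(2,6)^n$ (the signature of $T(2,6)$ has a subtlety since $6$ is even, i.e. $T(2,6)$ is a link, not a knot, so one must use the right multivariable or link version of the invariant), is where the real work lies. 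I would handle the parity issue by passing to a suitable knotification or by working with the maximal-Euler-characteristic definition directly on the link, keeping all constants explicit so that the final $C_k < 44$ can be verified by inspection.
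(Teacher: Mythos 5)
Your outline matches the paper's strategy in broad shape (delete $k$-th powers of generators to build the cobordism; use signature-type bounds from below), but at the two places where the theorem actually has content, the proposal defers or substitutes an argument that does not work. On the upper bound: the whole point is to show that $(ab)^m$ can be reduced to a braid of \emph{bounded} length by deleting roughly $2m/k$ blocks $a^k$ or $b^k$ plus only a bounded number of single letters, for $k=4,5,6$ -- and this is exactly what you postpone (``where the real work lies''). Coxeter's finiteness of $B_3$ modulo $\sigma_i^k$ does not supply this: it is a statement about the group quotient, allowing insertions and deletions of $\sigma_i^{\pm k}$ anywhere, and gives no positive-monoid reduction with controlled waste. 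The paper never uses Coxeter's theorem as input; it uses explicit word identities such as $\Delta^8=ab^3ab^5ab^3ab^4ab^4$ and $\Delta^{4i}=ab^3(ab^5)^{i-1}ab^3ab^4(ab^5)^{i-2}ab^4$, tracks the wasted letters ($\widetilde C_4=6$, $\widetilde C_5=19$, $\widetilde C_6=20$), and for $k=6$ needs an extra idea you do not have: after the deletions one is left with $(ab^{-1})^{2j}$, whose closure is within bounded cobordism distance of the unknot because two saddles turn it into $L\#L$ with $L$ the (amphichiral) closure of $(ab^{-1})^{j}$, hence slice. Without these inputs the constant $C_k$ cannot be extracted, and for $k=6$ the reduction itself is not established.

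On the lower bound: slice-Bennequin (equivalently any slice-torus invariant, since they all coincide up to normalization on positive braid closures) gives only $d_\chi\geq|(k-1)n-2m|-O(1)$, which matches the claimed formula in the regime $n\leq 2m/k$ but is off by the unbounded amount $4m/k$ when $n\geq 2m/k$; so ``two differently normalized slice-torus invariants'' cannot produce the second, independent linear bound. What is needed is the Levine--Tristram signature at $\omega=e^{2\pi i\theta_k}$ just past the last jump of $\sigma_\omega(T(2,k))$ ($\theta_4=1/4$, $\theta_5=3/10$, and $\omega=-1$ for $k=6$), where $\sigma_\omega(T(2,k)^n)=(k-1)n$ is already full while the Gambaudo--Ghys estimate forces $\sigma_\omega(T(3,m))\approx 4\theta_k m<2m$; this yields $d_\chi\geq(k-1)n-4\theta_k m-O(1)$, and the two piecewise-linear bounds together give the formula. (Levine--Tristram also handles the link cases $T(2,4)$, $T(2,6)$, $T(3,m)$ with $3\mid m$, where $\tau$, $s$, $\Upsilon$ would first need link extensions.) Finally, your fallback of invoking Theorem~\ref{thm:3-braids} and then comparing $T(2,3)^N$ with $T(2,k)^n$ cannot rescue the lower bound: the reverse triangle inequality, optimized over $N$, gives at best $(k-1)n-2m-O(1)$, short of the required $(k-1)n-(k-2)\tfrac{2m}{k}$ by about $2m/3$ when $k=6$ -- the loss occurs because the distance to $T(2,3)^N$ is governed by a different value of $\omega$ than the one separating $T(3,m)$ from $T(2,k)^n$, so the detour genuinely discards information.
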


As indicated above, the case $k = 6$ about sums of the hexafoil link $T(2,6)$ can be interpreted as an `affine limit case' in Coxeter's factor groups. The cobordism distance between $T(3,m)$ and $T(2,k)^n$ for larger parameters, i.e. for $k \geq 7$, does not admit an obvious analogous formula. Nevertheless, for all fixed $m$ and $k \geq 7$, we are able to determine the said cobordism distance for almost all values of~$n$.

\begin{theorem}\label{thm:large-k}
  There exists a constant $C < 44$ such that for all $k \geq 7$
  and all $n, m \geq 0$ there exists an error term $E_k(m, n)$
  satisfying $|E_k(m, n)| \leq C$ we have:
  \begin{enumerate}[\normalfont(i)]
    \item If $n \geq m/3$, then
      \[
        d_\chi(T(3, m), T(2, k)^n) = (k-1)n - 4m/3 + E_k(m, n).
      \]
    \item If $n \leq 5m/3k - (k+4)$, then
      \[
        d_\chi(T(3, m), T(2, k)^n) = 2m - (k-1)n + E_k(m, n).
      \]
  \end{enumerate}

\end{theorem}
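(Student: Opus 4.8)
The plan is to establish matching upper and lower bounds, with the two cases reflecting which of two competing lower bounds is active; the upper bounds will be assembled from the constructions behind Theorems \ref{thm:3-braids} and \ref{thm:3-torus-knots} together with the reduction technique of Coxeter.

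For the lower bounds, note that both $T(3,m)$ and $T(2,k)^n$ are closures of positive braids, so the slice--Bennequin inequality is sharp and the smooth four-genus realises the Seifert genus; explicitly $-\chi_4(T(3,m)) = 2m-3$ and $-\chi_4\big(T(2,k)^n\big) = (k-1)n-1$. Gluing a minimal cobordism onto a minimal slice surface (of a link without closed components) gives $d_\chi(K,L) \ge \big|{-\chi_4(K)} - {-\chi_4(L)}\big|$, hence
\[
  d_\chi\big(T(3,m),T(2,k)^n\big)\ \ge\ \big|\,2m-(k-1)n-2\,\big|.
\]
In case (ii) one has $(k-1)n < 2m$, so this reads $2m-(k-1)n-2$, which is the asserted estimate up to an additive constant. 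For case (i) I would instead invoke a Tristram--Levine signature $\sigma_\omega$ at a transcendental $\omega$ on the unit circle close to $-1$: then $\sigma_\omega$ incurs no nullity correction for either link, is additive under connected sum, satisfies $\sigma_\omega\big(T(2,k)\big) = -(k-1)$ (as $-1$ is never a root of $\Delta_{T(2,k)}$), and $\sigma_\omega\big(T(3,m)\big) = -\tfrac{4m}{3} + O(1)$ uniformly in $m$ by Litherland's computation of torus-knot signature functions. Since the cobordism distance bounds the jump of any such signature, $d_\chi\big(T(3,m),T(2,k)^n\big) \ge (k-1)n - \tfrac{4m}{3} - O(1)$, and for $n \ge m/3$ (so that $(k-1)n \ge 2m > \tfrac{4m}{3}$, using $k \ge 7$) this is the bound of (i).

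For the upper bound in (i) I would route through the hexafoil: by Theorem \ref{thm:3-torus-knots} with $k=6$ and the hypothesis $n \ge m/3$,
\[
  d_\chi\big(T(3,m),T(2,6)^n\big)\ =\ \tfrac{2m}{6} + 5\big(n-\tfrac{2m}{6}\big) + O(1)\ =\ 5n - \tfrac{4m}{3} + O(1),
\]
and one then enlarges each of the $n$ summands $T(2,6)$ to $T(2,k)$ by adding $k-6$ crossings, a cobordism of Euler characteristic $-(k-6)n$; concatenating yields $d_\chi\big(T(3,m),T(2,k)^n\big) \le (k-1)n - \tfrac{4m}{3} + O(1)$. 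For the upper bound in (ii) this routing is wasteful, and I would instead build a cobordism directly from the braid $(\sigma_1\sigma_2)^m$: rewriting it with braid relations so as to expose blocks $\sigma_i^k$ (Coxeter's move), deleting such a block is a cobordism of Euler characteristic $-k$ that shortens the word by $k$, while a block may instead be split off as a connected summand $T(2,k)$ at bounded cost. Running this so as to keep $n$ of the exposed blocks as summands and delete the others, together with the deletions of all letters lying outside blocks, produces a cobordism to $T(2,k)^n$ of Euler characteristic $-(2m-(k-1)n) + O(1)$ --- Euler-minimal up to a constant --- provided $n$ is small enough that enough disjoint blocks can be exposed, which is exactly the hypothesis $n \le \tfrac{5m}{3k} - (k+4)$.

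The main obstacle is the construction for (ii): one must show that the Coxeter rewriting of $(\sigma_1\sigma_2)^m$ exposes blocks $\sigma_i^k$ at the rate $\tfrac{5m}{3k}$, and that the cumulative overhead of the rewriting and of the splitting-off operations is bounded by a constant that does \emph{not} depend on $k$. Both points are delicate. A single application of a braid relation to manufacture one block $\sigma_i^k$ could a priori cost $O(k)$ in Euler characteristic, so one has to exploit the rigidity of the particular word $(\sigma_1\sigma_2)^m$ --- rather than work with a general positive $3$-braid as in Theorem \ref{thm:3-braids} --- in order to keep the total overhead $O(1)$. This is precisely the place where Coxeter's analysis of $\Delta^2 = (\sigma_1\sigma_2)^3$ modulo $k$-th powers enters, and where the breakdown of finiteness at $k=6$ is what limits the reach of the reduction to $\tfrac{5m}{3k}$ and produces the correction term $k+4$.
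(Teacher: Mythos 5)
Your lower bounds and your upper bound for case (i) follow essentially the paper's route: the slice--Bennequin bound (inequality~(\ref{eq:slice-bennequin})) for case (ii), a Levine--Tristram signature near $\omega=-1$ (Lemma~\ref{lem:signature-bounds}(iv)) for case (i), and, for the upper bound in (i), composing the essentially optimal cobordism from $T(3,m)$ to $T(2,6)^n$ given by Theorem~\ref{thm:3-torus-knots} with the cobordism of Euler characteristic $-(k-6)n$ that enlarges each hexafoil summand to $T(2,k)$. The genuine gap is the upper bound in case (ii), which you explicitly leave open: you never exhibit the roughly $5m/3k$ disjoint blocks $\sigma_i^k$, and that is the entire content of this case. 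Moreover, your diagnosis of the difficulty is off. Braid relations and conjugation cost nothing (they do not change the closure), and the number of single-letter deletions does not need to be bounded, let alone bounded independently of $k$: Lemma~\ref{lem:delete-power} already prices in the deletion of \emph{every} letter not belonging to one of the $n$ retained $k$-th powers, and yields $d_\chi(\widehat\beta,T(2,k)^n)\leq \ell(\beta)-(k-1)n+2$ regardless of how many single generators are removed. So no ``rigidity'' of $(\sigma_1\sigma_2)^m$ and no $k$-independent overhead are required; the only issue is how many disjoint $k$-th powers can be exposed, and the constant in the hypothesis $n\leq 5m/3k-(k+4)$ is allowed to (and does) depend on $k$. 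Coxeter's finiteness phenomenon is not the mechanism here.

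What is actually needed, and what the paper supplies in the ``larger powers'' part of Section~\ref{sec:finding-subsurfaces}, is the explicit normal form from~\cite{bbl}: $\Delta^{4i}=(ab)^{6i}=ab^3(ab^5)^{i-1}ab^3ab^4(ab^5)^{i-2}ab^4$ (and the analogous form of $(ab)^{6i+3}$). Deleting the $2i+1$ occurrences of $a$ --- about $m/3$ single-letter deletions, all already accounted for by Lemma~\ref{lem:delete-power} --- leaves the pure power $b^{10i-1}$, which visibly contains $\lfloor (10i-1)/k\rfloor\approx 5m/3k$ disjoint $k$-th powers; at most four further deletions reduce an arbitrary $(ab)^m$ to one of these two cases, and the slack $-(k+4)$ in the hypothesis absorbs this. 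Lemma~\ref{lem:delete-power} then gives $d_\chi(T(3,m),T(2,k)^n)\leq 2m-(k-1)n+2$ for all $n\leq 5m/3k-(k+4)$, which is exactly the missing bound. One last, minor point: the theorem asks for an explicit constant $C<44$, so your $O(1)$ terms must be traced through the constant $C_6=131/3$ of Theorem~\ref{thm:3-torus-knots} (which your hexafoil routing in (i) inherits); that is bookkeeping rather than a conceptual issue, but it should be done.
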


The extreme case of large $k$ can be understood via Feller's result on the cobordism distance between torus links of type $T(3,m)$ and $T(2,k)$~\cite{feller}. Apart from that, our results are independent of Feller's result.

The proof of the three above theorems involves a construction of smooth cobordisms of suitable genus, and lower bounds certifying that these cobordisms have essentially minimal genus.
The constructive part is inspired by
Truöl's recent results on the upsilon invariant of $3$-braid knots~\cite{truoel}.
As so often in the context of cobordisms, the lower bounds are given by evaluations of the Levine-Tristram signature functions $\sigma_\omega(L)$, defined for all links $L \subset S^3$ and (almost) all $\omega \in S^1$~\cite{levine, tristram}. The relevant values of $\omega$ to bound the cobordism distance between $T(3,m)$ and $T(2,k)^n$ for $k=3,4,5$ are in the vicinity of
\[
\omega=\zeta_6,\zeta_4,\zeta_{10}^3,
\]
respectively, where $\zeta_N$ denotes the $N$-th primitive root of unity.
For larger values of $k$, the relevant value of $\omega$
is $-1$, which corresponds to the classical signature invariant.
The two main steps of the proofs, constructing minimal cobordisms, and finding lower bounds, are contained in the following two sections, in this order.
The actual proofs are presented in the last section.

\section{Constructing key cobordisms}\label{sec:finding-subsurfaces}
The key technique
in constructing minimal cobordisms
is to find many subsurfaces
of the right type
in the Seifert surface of
the braid in question.
More precisely, we transform
a braid $\beta$ to the trivial braid
by deleting powers $a^k$ or $b^k$
(corresponding to subsurfaces of
type $T(2, k)$)
of generators, for fixed $k \geq 3$,
while at the cost of an error we are
allowed to also delete single generators.
We summarize this strategy in the following result.

\begin{lemma}\label{lem:delete-power}
  Let $k \geq 1$,
  let $\beta \in B_3^+$ be a positive braid, and let
  $\beta'$ be obtained from $\beta$ by removing
  a $k$-th power $a^k$ or $b^k$ of a generator
  $n$ times,
  and by removing an arbitrary number of single generators.
  Then
  \[
    d_\chi(\widehat \beta, T(2, k)^n) \leq \ell(\beta) - (k-1)n + 2.
  \]
\end{lemma}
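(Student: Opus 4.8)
The plan is to argue by induction on the total number $N$ of deletions used to transform $\beta$ into $\beta'$, peeling them off one at a time. In the base case $N=0$ we have $n=0$ and must bound $d_\chi(\widehat\beta,T(2,k)^0)=d_\chi(\widehat\beta,\mathrm{unknot})$: the braid surface of $\beta$ carries $\ell(\beta)$ bands on three disks, and deleting them one at a time turns $\widehat\beta$ into the closure of the trivial $3$-braid by $\ell(\beta)$ saddle moves, after which two further saddles merge the resulting (at most three) unknotted components; hence $d_\chi(\widehat\beta,\mathrm{unknot})\le\ell(\beta)+2$, which is exactly the claimed bound at $n=0$. For the inductive step we inspect the first deletion. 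If it removes a single generator, producing $\beta_1\in B_3^+$ with $\ell(\beta_1)=\ell(\beta)-1$ and the same $n$, then the oriented resolution of the corresponding crossing is a single saddle, so $d_\chi(\widehat\beta,\widehat{\beta_1})\le 1$; together with the inductive bound for $\widehat{\beta_1}$ and the triangle inequality for $d_\chi$ this gives $d_\chi(\widehat\beta,T(2,k)^n)\le 1+\ell(\beta_1)-(k-1)n+2=\ell(\beta)-(k-1)n+2$.

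The substantive case is when the first deletion removes a $k$-th power $a^k$ (the case $b^k$ is symmetric), leaving $\beta_1\in B_3^+$ with $\ell(\beta_1)=\ell(\beta)-k$ and with $n$ replaced by $n-1$. The key geometric input is the estimate
\[
  d_\chi\bigl(\widehat\beta,\ \widehat{\beta_1}\,\#\,T(2,k)\bigr)\le 1,
\]
which I would establish at the level of spanning surfaces. Realise a Seifert surface of $\widehat{\beta_1}\#T(2,k)$ as the boundary connected sum $\Sigma_{\beta_1}\natural\Sigma_{T(2,k)}$, where $\Sigma_{\beta_1}$ is the braid surface of $\beta_1$ (three disks and $\ell(\beta_1)$ bands), $\Sigma_{T(2,k)}$ is the standard surface made of two disks and $k$ parallel bands, and the connecting band of the $\natural$ is attached to one Seifert disk of $\Sigma_{\beta_1}$. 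Attaching one further band, from the second disk of the $T(2,k)$ summand to a second Seifert disk of $\Sigma_{\beta_1}$, absorbs the two auxiliary disks into the braid surface and produces precisely the braid surface of a positive $3$-braid in which a consecutive block $a^k$, at a word position determined by the two attaching arcs, has been inserted into $\beta_1$; that is, $\Sigma_\beta=\bigl(\Sigma_{\beta_1}\natural\Sigma_{T(2,k)}\bigr)\cup(\text{one band})$, and the count $3-\ell(\beta)=\bigl[(3-\ell(\beta_1))+(2-k)-1\bigr]-1$ confirms the bookkeeping. Since attaching a band to a surface realises a single saddle move on its boundary, the displayed estimate follows. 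Granting it, the inductive hypothesis supplies a cobordism from $\widehat{\beta_1}$ to $T(2,k)^{n-1}$ with $-\chi\le\ell(\beta_1)-(k-1)(n-1)+2$; performed inside the $\widehat{\beta_1}$ connected summand of $\widehat{\beta_1}\#T(2,k)$ --- legitimate, since a cobordism between links contained in a ball may be taken inside that ball times $[0,1]$ --- it turns $\widehat{\beta_1}\#T(2,k)$ into $T(2,k)^{n-1}\#T(2,k)=T(2,k)^n$, and the triangle inequality yields
\[
  d_\chi(\widehat\beta,T(2,k)^n)\le 1+\ell(\beta_1)-(k-1)(n-1)+2=\ell(\beta)-(k-1)n+2,
\]
with the arithmetic closing up exactly and the additive $+2$ inherited from the base case.

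The only genuine obstacle I anticipate is the block-move estimate: one must verify carefully that adding the extra band to $\Sigma_{\beta_1}\natural\Sigma_{T(2,k)}$ really reconstructs the braid surface of a $3$-braid containing an honest consecutive block, i.e.\ that the $k$ block bands end up running between two of the three Seifert disks at the correct position in the word. For $3$-braids the local picture around such a block is rigid enough to be checked by inspection, but this is the step that demands a careful drawing. A minor bookkeeping point: so that the iterated connected sum $T(2,k)^n$ in the conclusion matches the paper's convention, with all summands on one distinguished component, one should always attach the connecting band of the boundary connected sum to the middle Seifert disk $D_2$, which places every $T(2,k)$ summand on the same component. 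Finally, $k=1$ and $k=2$ need no separate treatment, the block move then degenerating respectively to a single generator deletion and to a Hopf band attachment.
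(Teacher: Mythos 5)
Your key geometric step---realising the braid surface of $\beta$ as the braid surface of $\beta_1$ boundary-connect-summed with the standard $T(2,k)$ surface plus one extra band, hence $d_\chi(\widehat\beta,\widehat{\beta_1}\#T(2,k))\le 1$---is exactly the paper's first step (there phrased as cutting the canonical Seifert surface along an arc), and your arithmetic closes up correctly. The problem lies in the inductive packaging of the iteration. In the inductive step you invoke the inductive hypothesis only as a number, $d_\chi(\widehat{\beta_1},T(2,k)^{n-1})\le\ell(\beta_1)-(k-1)(n-1)+2$, and then want to perform ``that cobordism'' inside the $\widehat{\beta_1}$ summand. The justification you give is not the right statement: the $\widehat{\beta_1}$ part of the connected sum is not a link contained in a ball; inside the sum ball's complement one sees a tangle, an arc of one component being missing. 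What you actually need is a cobordism from $\widehat{\beta_1}$ to $T(2,k)^{n-1}$ that is a product near a marked arc of the summand-carrying component, equivalently one along whose surface the sum ball can be dragged from bottom to top. Such a cobordism need not exist for an arbitrary minimiser of $d_\chi$ (the definition excludes only closed components, so the relevant component may be capped off by a disk), so the induction would have to be strengthened to produce cobordisms with this extra structure---at which point you are essentially rebuilding the paper's explicit construction.

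More seriously, even granting the dragging argument, the new $T(2,k)$ summand ends up attached to whichever component of $T(2,k)^{n-1}$ the marked component flows to, and for even $k$ (the cases $k=4$, $k=6$ and all larger even $k$, which the lemma must cover) this matters: with the paper's convention all summands are attached to one distinguished component, and attaching the new copy to a satellite component produces a genuinely different link (a chain-type sum rather than the keychain $T(2,k)^n$); the paper explicitly flags this ambiguity. Your remedy---attaching the connecting band to the middle Seifert disk---controls where the summand sits on $\widehat{\beta_1}$ at the bottom of the cobordism, not which component it lands on at the top, so the identification $T(2,k)^{n-1}\#T(2,k)=T(2,k)^n$ is unjustified and the induction does not close for even $k$ (for odd $k$ the target is a knot and the issue disappears). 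The paper sidesteps both problems with one global construction: each split-off $T(2,k)$ stays attached to a braid strand and can be slid along it, all remaining letters of $\beta'$ are then deleted so that only the three unknotted strands carrying the summands remain, and two final saddles merge these three components, placing every summand on the single distinguished component; the connected-sum ambiguity is resolved only at that last step. If you reorganise your induction so that the intermediate objects are the three strands carrying summands, merging once at the very end rather than closing up to $T(2,k)^{n-1}$ at each stage, your argument becomes the paper's.
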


\begin{proof}
  We first construct a cobordism of Euler characteristic $-1$
  between $\widehat \beta$ and  $\widehat \beta' \# T(2, k)$,
  where $\beta'$ is obtained from $\beta$ by deleting
  a $k$-th power of a generator.
  Note that the notation $\widehat \beta' \# T(2, k)$
  is ambiguous as soon as $\widehat \beta'$ has more than one component.
  This ambiguity will be resolved at the end of the proof.
  As indicated in Figure~\ref{fig:delete-power},
  a cobordism as described above can be obtained by cutting
  the canonical Seifert surface of $\beta$ along an arc.
  Because $T(2, 1)$ is the trivial knot,
  the special case $k = 1$ shows in particular
  that the deletion of a generator
  yields a cobordism of Euler characteristic $-1$.

  Because the summand $T(2, k)$ can be moved
  freely
  along the strand it is connected to,
  this allows iteration of this removal,
  so that
  if $\beta'$ is obtained from $\beta$ by deleting
  $n$ instances of $k$-th powers of $a$ or $b$
  and $j$ instances of single generators,
  then there exists a cobordism
  of Euler characteristic $-(n+j)$
  between $\widehat \beta$
  and
  a connected sum of $\beta'$ with
  $n$ many copies of $T(2, k)$.
  Deleting the generators in $\beta'$ one by one
  yields a split union of three powers of $T(2, k)$,
  which can be merged
  to the sum $T(2, k)^n$
  using two saddle moves yielding a cobordism of Euler characteristic $-2$.
  We thus obtain a cobordism
  of Euler characteristic
  \[ \chi = -(n + j + \ell(\beta') + 2) = -(\ell(\beta) - (k-1)n + 2)\]
  between $\widehat \beta$ and $T(2, k)^n$.
\end{proof}

\begin{figure}[htb]
  \centering
  \begin{minipage}{0.33\textwidth}
    \centering
    \includegraphics{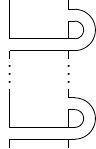}
  \end{minipage}%
  \begin{minipage}{0.33\textwidth}
    \centering
    \includegraphics{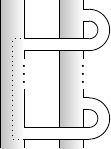}
  \end{minipage}%
  \begin{minipage}{0.33\textwidth}
    \centering
    \includegraphics{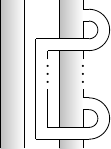}
  \end{minipage}%
  \caption{The braid $a^k$ or $b^k$,
  its canonical Seifert surface, and
the surface obtained from cutting along
the dotted arc}%
  \label{fig:delete-power}
\end{figure}

\subsection{Third powers}
Our goal in this subsection is to
start with a positive braid $\beta \in B_3^+$
and, by removing third powers of generators
and a constant number of single generators,
transform it into the trivial braid.
We first iterate the procedure of writing
$\beta$ in Garside normal form,
and then removing all emerging third powers
of generators.
This process terminates once
the Garside normal form
has no third powers
of generators.
After conjugation
and removing
at most $3$ single generators,
the Garside normal form~\cite{garside} of
the resulting braid
can be assumed to be of the form
\(
  \Delta^{i} (a^{2} b^{2})^{j},
\)
where $\Delta = aba$.
Indeed, Proposition~3.2 in~\cite{truoel}
directly implies that,
up to conjugation, any
$3$-braid can be written
as a product $\Delta^i a^{p_1} b^{q_1}
\cdots a^{p_r} b^{q_r} \beta'$,
where $p_j, q_j \geq 2$ for all $j$,
and
where $\beta'$ is either a power of $a$
or equal to $a^p b$ for some $p \in \{1, 2, 3\}$.

Consider the braid
$(a^2b^2)^\ell$.
Upon deletion of a single generator,
it can be transformed into
\[
  (a^2 b^2)^{\ell_1} a b^2 (a^2 b^2)^{\ell_2}
  = (a^2 b^2)^{\ell_1 - 1} a \Delta^2 (a^2b^2)^{\ell_2}
  = \Delta^2 (a^2 b^2)^{\ell_1 - 1} a (a^2 b^2)^{\ell_2},
\]
for any $\ell_1, \ell_2$ with
$\ell_1 + \ell_2 = \ell - 1$.
Removing a third power of $a$ and
then a third power of $b$ similarly leads to
\[
  \Delta^2(a^2 b^2)^{\ell_1-2} a^2 b (a^2 b^2)^{\ell_2 - 1}
  = \Delta^4 (a^2 b^2)^{\ell_1 - 2} b (a^2 b^2)^{\ell_2 - 2},
\]
from which we may again remove two third powers
of generators to obtain
the braid $\Delta^4 (a^2 b^2)^{\ell_1-3}a b^2 (a^2 b^2)^{\ell_2 - 3}$.
This process can be iterated until we are left
with a positive braid of the form
$\Delta^{2i} \beta'$
for some $i$,
where $\beta'$ is a positive braid
of length at most $5$ that contains a half
twist $\Delta$ as soon as its length is
at least $3$.
We conclude that after deleting at most
$3$ single generators, the braid
$(a^2 b^2)^\ell$ can be transformed into
a braid of the form $\Delta^i$
for some $i$.


Next, consider the braid $\Delta^i$.
As indicated
by the the fact that
\[
  (ab)^6 =
  (aba) b (aba) b (aba) b
  =
  b a b^3 a b^3 a b^2,
\]
it is possible
to delete third powers $a^3$ and $b^3$
of generators from $\Delta^4 = (ab)^6$ to obtain
the trivial braid,
The fact that
\[
  (ab)^4 =
  (aba) b (aba) b =
  b a b^3 a b^2
\]
shows that two third powers
of a generator
can be removed from $\Delta^3$,
showing that upon deletion of at most
$4$ single generators (the worst case being $(ab)^5$),
the braid
$\Delta^i$ can be transformed to the
trivial braid by deleting third powers
of generators.


Note that in order to reduce $\beta$
to the trivial braid by removing third powers of generators,
we had to delete at most $3 + 3 + 4 = 10$ generators.
By Lemma~\ref{lem:delete-power},
we have thus shown that
for any positive braid $\beta \in B_3$,
and any $n \geq 0$
with $3n \leq \ell(\beta) - 10$ we have
\[
  d_\chi(\widehat \beta, T(2, 3)^n)
  \leq \ell(\beta) - 2n + 2.
\]

\subsection{Fourth powers}
As shown in~\cite{bbl}, we have
\[
  \Delta^8 = (ab)^{12} = a b^3 a b^5 a b^3 a b^4 a b^4.
\]
After removing three fourth powers of $b$,
the braid is transformed into
\[
  a b^3 (a b a) b^3 a^2 = a b^4 a b^4 a^2,
\]
which is transformed into the trivial braid
after further removal of three fourth powers of generators.

It is possible to remove $2$ single generators from $(ab)^3$
to obtain $b^4$.
Thus, one can remove $4$ single generators from $(ab)^4$
to obtain $b^4$.

Similarly,
note that
\[
  (ab)^5 =
  a (bab) (aba) (bab)
  =
  a^2 (bab) (aba) ba
  =
  a^3 (bab) a b^2 a
  =
  a^4 b a^2 b^2 a,
\]
which, after removing a fourth power of $a$, is equal to
\(
  b a^2 b^2 a.
\)
This braid is elementarily conjugate to
\[
  ab a^2 b^2 = bab a b^2 = b^2 a b^3,
\]
which again is elementarily conjugate to $a b^5$.
It is thus possible to remove
$2$ single generators from $(ab)^5$,
$4$ single generators from $(ab)^6$,
or $6$ generators from $(ab)^7$ to
obtain a braid that can be transformed into the trivial braid
by removing two fourth powers of generators.

Next, similar techniques show
that three fourth powers of generators can be removed from
$(ab)^8$.
It is thus enough to remove $4$ single generators from $(ab)^8$
or $6$ generators from $(ab)^9$.
Finally, removing $4$ single generators from $(ab)^{10}$ or
$6$ single generators from $(ab)^{11}$ is enough.

Thus, the braid $(ab)^m$ can be transformed into the trivial braid
by deleting at most $6$ single generators, the worst case being
$(ab)^j$ for $j \in \{7, 9, 11\}$.
Lemma~\ref{lem:delete-power}
implies that for any $n \geq 0$ with $4n \leq 2m - 6$ that
\[
  d_{\chi}(T(3, m), T(2, 4)^n) \leq 2m - 3n + 2.
\]

\subsection{Fifth powers}
Again as shown in~\cite{bbl},
\[
  \Delta^{4i} = (ab)^{6i} = ab^3 (ab^5)^{i-1} a b^3 ab^4 (ab^5)^{i-2} ab^4
\]
for all $i \geq 2$.
Removing fifth powers from this braids results in
\(
  ab^3 a^{i}b^3 a b^4 a^{i-1} b^4,
\)
which after deleting at most $7$ generators and further removal of
fifth powers is equal to
\(
  ab a b^3 = bab^4,
\)
which upon deletion of a further single generator is equal to
the fifth power $b^5$.

Similarly,
\[
  \Delta^{4i + 2} = (ab)^{6i + 3} = a b^3 (a b^{5})^{i-1} a b^4 a b^3 (ab^5)^{i-1} a b^4
\]
for all $i \geq 1$,
which after deletion of at most $8$ generators can be transformed,
using deletion of fifth powers of generators, into
$ab^2a b^2$, a braid of length $6$.

Finally, an arbitrary braid $(ab)^m$ with $m \geq 6$
can be transformed either to $\Delta^{4i}$ or $\Delta^{4i+2}$ for some $i$
using at most $5$ deletions of single generators
for a total error of at most $5 + 6 + 8 = 19$.
The braid $(ab)^j$ for $j \leq 5$ is shorter than this maximal error,
so we have shown that for all $n \geq 0$ with $5n \leq 2m - 19$
we have
\[
  d_{\chi}(T(3, m), T(2, 5)^n) \leq 2m - 4n + 2.
\]

\subsection{Sixth powers}
The formula $\Delta^{4i} = ab^3 (ab^5)^{i-1} ab^3 ab^4 (ab^5)^{i-2}ab^4$
from the previous subsection
shows that by deleting $12$ single generators
and removing one sixth power of a generator,
the braid $(ab)^{6i}$ can be transformed into
$(ab^5)^{2i-3}$.
Removing a further sixth power
yields $(ab^{-1})(ab^5)^{2i - 3}$,
and removing two further single generators
transforms $(ab)^{6i}$ into $(ab^{-1})^{2i-4}$
using $14$ deletions of single generators
and $2i-2$ deletions of sixth powers.

Similarly, the formula
$\Delta^{4i+2} = ab^3 (ab^5)^{i-1} ab^4 ab^3 (ab^5)^{i-1} ab^4$
shows that by deleting $12$ single generators
and removing $2i-1$ sixth powers of a generator,
the braid
$(ab)^{6i+3}$ can be transformed into
$(ab^{-1})^{2i-2}$.

The closures of the braids $(ab^{-1})^{2j}$ have cobordism
distance at most two to the unknot.
This is because two cobordisms of Euler characteristic $-1$
transform the closure of $(ab^{-1})^{2j}$
into $L\#L$, where $L$ is the closure of
$(ab^{-1})^j$.
Because $L$ is isotopic to its mirror image,
it follows that $L\#L$ is slice.
Using this as well as Lemma~\ref{lem:delete-power}
and the fact that four deletions of single generators
are enough to turn any power of $ab$ into $\Delta^{4i}$
or $\Delta^{4i+2}$ for some $i$,
we obtain that for all $n \geq 0$ with $6n \leq 2m - 20$
we have
\[
  d_\chi(T(3, m), T(2, 6)^n) \leq 2m - 5n + 2.
\]

\subsection{Larger powers}
From here on we are not quite as efficient as in the previous cases,
in the sense that we cannot, up to bounded error,
turn an arbitrary power of $ab$
into the trivial braid by removing $k$-th powers of generators
for $k \geq 7$.
We nonetheless find a substantial number of $k$-th powers of generators,
although we make no claim that this number is optimal.

Note that the braid
$(ab)^{6i} = ab^3 (ab^5)^{i-1} ab^3 ab^4 (ab^5)^{i-2}ab^4$
can be transformed into $b^{10i-1}$ by deleting all
$2i+1$ occurrences of $a$.
This finds $[(10i-1)/k]$ many $k$-th powers of generators in
$(ab)^{6i}$.

Similarly, the braid
$(ab)^{6i+3} = ab^3 (ab^5)^{i-1} ab^4 ab^3 (ab^5)^{i-1} ab^4$
can be transformed into $b^{10i+4}$ by deleting all
$2i+2$ occurrences of $a$.
This finds $[(10i+4)/k]$ many $k$-th powers of generators in
$(ab)^{6i+3}$.

Finally, any power $(ab)^m$ can be turned into one of the
braids under consideration by deleting at most four single generators.
Using Lemma~\ref{lem:delete-power},
we are now able to conclude that for all $n \geq 0$ with
$n \leq 5m/3k - (k+4)$ we have
\[
  d_{\chi}(T(3, m), T(2, k)^n) \leq 2m - (k-1)n + 2.
\]

\section{Signatures}\label{sec:signatures}
In this section, we provide lower bounds
on the cobordism distance between
$T(3, m)$ (or a positive braid $\beta$) and $T(2, k)^n$
for small $m$ compared to $n$.
To this end, we recall a result proven by
Gambaudo-Ghys, namely Corollary 4.4 in~\cite{gambaudo-ghys}.

\begin{lemma}\label{lem:gambaudo-ghys}
  Let $\beta \in B_3^+$ be a positive braid,
  and
  let $\omega = e^{2 \pi i \theta}$ for a rational number
  $\theta$ with $0 < \theta < 1/3$.
  Then the Levine-Tristram signature at $\omega$ satisfies
  \[
    |\sigma_\omega(\widehat \beta) - 2 \theta \ell(\beta)| \leq 2.
  \]
\end{lemma}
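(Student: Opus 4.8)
The plan is to derive the inequality from the quasimorphism property of the Levine–Tristram signature on the braid group $B_3$, which is exactly the content of Gambaudo–Ghys. I would first recall that for each fixed $\omega = e^{2\pi i\theta}$ with $\theta$ irrational—or rational but avoiding the finitely many bad values—the map $\beta \mapsto \sigma_\omega(\widehat\beta)$ descends from a homogeneous quasimorphism on $B_3$ after subtracting a suitable multiple of the abelianization (exponent sum) homomorphism $e\colon B_3 \to \mathbb{Z}$. Concretely, Corollary~4.4 of~\cite{gambaudo-ghys} gives a function that, evaluated on a positive braid word, differs from $2\theta\,\ell(\beta)$ by a universally bounded defect; the constant $2$ is the explicit bound obtained there for $B_3$ (it comes from the defect of the Meyer cocycle / the rank of the relevant Seifert form, which for three-braids is small).

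The key steps, in order, would be: (1) state the Gambaudo–Ghys formula expressing $\sigma_\omega(\widehat\beta)$ as a sum over the standard generators appearing in a braid word, plus an error controlled by the quasimorphism defect; (2) specialize to a positive word $\beta = \sigma_{i_1}\cdots\sigma_{i_{\ell(\beta)}}$ with all $i_j \in \{1,2\}$, so that each generator contributes $2\theta$ to the leading term, giving the main term $2\theta\,\ell(\beta)$; (3) collect the accumulated error terms and invoke the explicit bound for $B_3$ to conclude $|\sigma_\omega(\widehat\beta) - 2\theta\,\ell(\beta)| \le 2$; (4) note that the restriction $0 < \theta < 1/3$ is precisely the range in which the relevant Burau-type representation has the signature properties used in Gambaudo–Ghys's computation (outside this range the local contribution per generator changes), so the formula applies verbatim.

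The main obstacle is bookkeeping rather than conceptual: one must be careful that the cited corollary is stated for \emph{closed} braids and already incorporates the correction term, so that no additional boundary contribution needs to be added when passing from the open braid word to its closure $\widehat\beta$. A secondary point requiring care is that $\sigma_\omega$ is genuinely defined only for $\omega$ not a root of the Alexander polynomial; since we only need the statement for the specific $\omega$ used later ($\zeta_6$, $\zeta_4$, $\zeta_{10}^3$, and nearby values), and those are handled by taking one-sided limits or by Gambaudo–Ghys's averaged convention, I would simply adopt their convention for $\sigma_\omega$ and note that the bound is uniform in $\theta$ on $(0,1/3)$, which is all that is needed. No genuinely hard estimate is involved: the lemma is essentially a direct quotation of~\cite{gambaudo-ghys} specialized to positive words in $B_3^+$, and the proof amounts to making that specialization explicit.
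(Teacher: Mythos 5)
Your proposal matches the paper's treatment: the lemma is simply quoted as Corollary~4.4 of Gambaudo--Ghys, with no further argument supplied, and your plan ultimately defers to that same citation. One small caution: a literal per-generator ``accumulated defect'' argument via the quasimorphism property would produce an error growing linearly in $\ell(\beta)$ rather than the uniform bound $2$, so the constant must indeed be taken from the cited corollary itself (as you do in the end) rather than re-derived from the defect bookkeeping.
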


The formulas of Gambaudo-Ghys, specifically Proposition~5.1
in~\cite{gambaudo-ghys}, can be used to explicitly determine
the Levine-Tristram signatures of the links $T(2, k)$
for $k \in \{3, 4, 5\}$, see Figure~\ref{fig:signatures}
for the result.
For a link $L$, we introduce the shorthand notations
\begin{enumerate}[(i)]
  \item $\sigma_3(L) = \lim_{\theta \to 1/6_+} \sigma_{\exp(2 \pi i \theta)}(L)$,
  \item $\sigma_4(L) = \lim_{\theta \to 1/4_+} \sigma_{\exp(2 \pi i \theta)}(L)$,
  \item $\sigma_5(L) = \lim_{\theta \to 3/10_+} \sigma_{\exp(2 \pi i \theta)}(L)$,
  \item $\sigma_k(L) = \sigma_{-1}(L)$ for $k \geq 6$,
\end{enumerate}
in order to facilitate comparison of the signature functions
in specific neighbourhoods.
Note that the limit in $\sigma_k$
for $k \leq 5$
is taken down to the $\theta$ corresponding
to the last jump in the signature function
of $T(2, k)$,
as this is where
we expect to find
the largest difference between $\sigma_\omega(T(3, m))$
and $\sigma_\omega(T(2, k)^n)$ for small $m$.

\begin{figure}[htb]
  \centering
  \begin{minipage}{0.33\textwidth}
    \centering
    \includegraphics{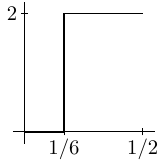}
  \end{minipage}%
  \begin{minipage}{0.33\textwidth}
    \centering
    \includegraphics{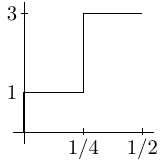}
  \end{minipage}%
  \begin{minipage}{0.33\textwidth}
    \centering
    \includegraphics{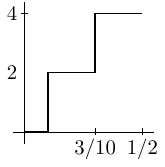}
  \end{minipage}%
  \caption{Levine-Tristram signatures of the $T(2, k)$ torus links for $k = 3,4,5$. The $x$-axis is $\theta$, and the $y$-axis is the signature
  function evaluated at $\omega = e^{2 \pi i \theta}$.}%
  \label{fig:signatures}
\end{figure}

\begin{lemma}\label{lem:signature-bounds}
  Let $\beta \in B_3^+$ be a positive braid
  of length $\ell(\beta)$
  and let $n$
  be a non-negative integer.
  Then
  \begin{enumerate}[\normalfont(i)]
    \item $d_\chi(\widehat \beta, T(2, 3)^n) \geq |2n - \ell(\beta)/3| - 2$,
    \item $d_\chi(\widehat \beta, T(2, 4)^n) \geq |3n - \ell(\beta)/2| - 2$,
    \item $d_\chi(\widehat \beta, T(2, 5)^n) \geq |4n - 3 \ell(\beta)/5| - 2$,
    \item $d_\chi(\widehat \beta, T(2, k)^n) \geq |(k-1)n - 2\ell(\beta)/3| - 4$
      for $k \geq 6$
      if $\widehat \beta$ is a torus link.
  \end{enumerate}
\end{lemma}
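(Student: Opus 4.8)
The plan is to derive each lower bound from the Levine-Tristram signature via the standard cobordism inequality: if there is a cobordism of Euler characteristic $-N$ between links $K$ and $L$ (with $b_0$ components merging appropriately), then for generic $\omega \in S^1$ one has $|\sigma_\omega(K) - \sigma_\omega(L)| \leq N + (\text{correction in components})$, so that $d_\chi(K,L) \geq |\sigma_\omega(K) - \sigma_\omega(L)| - c$ for a small universal constant $c$. The work is therefore to pick, for each $k$, the right evaluation point $\omega$ and to compute both $\sigma_\omega(\widehat\beta)$ and $\sigma_\omega(T(2,k)^n)$ there, up to bounded error.

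For the upper halves of the arguments (the $\sigma_\omega(\widehat\beta)$ term), I would invoke Lemma~\ref{lem:gambaudo-ghys}: at $\omega = e^{2\pi i \theta}$ with $0 < \theta < 1/3$ we have $\sigma_\omega(\widehat\beta) = 2\theta\,\ell(\beta) + O(1)$. Taking $\theta \to 1/6_+$, $1/4_+$, $3/10_+$ gives the coefficients $\ell(\beta)/3$, $\ell(\beta)/2$, $3\ell(\beta)/5$ appearing in (i)--(iii). For (iv), $\theta = 1/2$ lies outside the range where Lemma~\ref{lem:gambaudo-ghys} applies, which is exactly why the hypothesis there is restricted to $\widehat\beta = T(3,m)$ a torus link: one uses instead the known value $\sigma_{-1}(T(3,m)) = -4m/3 + O(1)$, i.e. $2\ell(\beta)/3 = 2m$ up to the coefficient bookkeeping (note $\ell = 2m$ for $T(3,m)$), so the $2\ell(\beta)/3$ in (iv) should be read as $4m/3$.

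For the $T(2,k)^n$ term I would use the additivity of the signature under connected sum, $\sigma_\omega(T(2,k)^n) = n\,\sigma_\omega(T(2,k)) + (\text{component correction})$, together with the explicit signature functions of $T(2,k)$ recorded in Figure~\ref{fig:signatures} and the shorthand $\sigma_k$. The point of taking the one-sided limits down to the last jump is that $\sigma_k(T(2,3)) = -2$, $\sigma_k(T(2,4)) = -3$, $\sigma_k(T(2,5)) = -4$, and $\sigma_{-1}(T(2,k)) = -(k-1)$, which produce the coefficients $2n$, $3n$, $4n$, $(k-1)n$ in the four statements. Combining with the $\widehat\beta$ estimate and the cobordism inequality yields $d_\chi \geq |(\text{coeff})\cdot n - (\text{coeff})\cdot \ell(\beta)| - (\text{const})$, with the constant $2$ in (i)--(iii) coming from the $\pm 2$ in Lemma~\ref{lem:gambaudo-ghys} plus a vanishing component term, and $4$ in (iv) absorbing the extra slack from the multi-component links $T(2,k)$ with $k$ even.

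The main obstacle I anticipate is not any single deep step but the careful handling of the discontinuities and the multi-component corrections. The Levine-Tristram signature jumps precisely at roots of the Alexander polynomial, and for $T(2,k)$ these jumps accumulate near the relevant $\omega$; one must argue that evaluating slightly past the last jump (the one-sided limit defining $\sigma_k$) is legitimate for the cobordism inequality and gives the claimed linear growth in $n$ without an $n$-dependent error. Likewise, when $T(2,k)$ has two components, the connected-sum additivity and the cobordism-inequality both acquire nullity/component terms that must be shown to stay bounded independently of $n$ and $m$; this is the reason the constant degrades from $2$ to $4$ in part~(iv). Once these bookkeeping points are pinned down, the four inequalities follow by direct substitution.
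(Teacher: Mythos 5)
Your proposal matches the paper's proof in all essentials: Lemma~\ref{lem:gambaudo-ghys} evaluated near $\theta = 1/6, 1/4, 3/10$ for $\sigma_\omega(\widehat\beta)$, additivity under connected sum together with the value $\pm(k-1)$ for $\sigma_k(T(2,k))$, the classical signature of $T(3,m)$ known up to bounded error for case (iv) (which is exactly why that case is restricted to torus links), and the standard inequality $d_\chi(L,L') \geq |\sigma_\omega(L) - \sigma_\omega(L')|$. The only small discrepancy is in attribution of the constant in (iv): in the paper the $4$ comes from the error term $|\sigma(T(3,m)) - 4m/3| \leq 4$ (via the homogeneous signature quasimorphism of Gambaudo--Ghys), not from multi-component nullity corrections, but this is cosmetic and does not affect the argument.
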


\begin{proof}
  Let $k \geq 3$.
  For $k \leq 5$, Lemma~\ref{lem:gambaudo-ghys} as well as the values
  $\sigma_k(T(2, k)) = k-1$ yield
  \[
    \max_{\omega \in S^1} |\sigma_\omega(T(2, k)^n) - \sigma_\omega(\widehat \beta)| \geq |\sigma_k(T(2, k)^n) - \sigma_k(\widehat \beta)|
    \geq |(k-1)n - 2 \theta_k \ell(\beta)| - 2,
  \]
  where $\theta_3 = 1/6$, $\theta_4 = 1/4$, $\theta_5 = 3/10$.
  For $k \geq 6$ on the other hand,
  we have
  \[
    \max_{\omega \in S^1} |\sigma_\omega(T(2, k)^n) - \sigma_\omega(\widehat \beta)|
    \geq |(k-1)n - 2\ell(\beta)/3| - 4.
  \]
  Here we use in addition that
  $|\sigma(T(3, m)) - 4m| \leq 4$, which follows from~\cite{gambaudo-ghys},
  specifically
  Proposition~5.2 and the fact that the homogeneous signature on $B_n$
  differs from the signature by at most $2n$, which is mentioned
  in their Section~5.1.
  Alternatively, explicit computations
  for the classical signatures of torus knots
  can be gathered from~\cite{glm},
  specifically Theorem~5.2.
  The desired results now follow from the well-known
  inequality
  \[
    d_\chi(L, L') \geq \max \left\{
    |\sigma_\omega(L) - \sigma_\omega(L') \mid \omega \in S^1 \right\}
  \]
  for all links $L, L'$.
\end{proof}

\section{Proofs of main results}\label{sec:proofs}
In this section, we combine the bounds from the previous two sections
to construct essentially minimal cobordisms between
the closure $\widehat \beta$ of a non-split positive braid
$\beta \in B_3^+$ or $T(3, m)$ and $T(2, k)^n$.
In the case $k \in \{3,4,5,6\}$, all these cobordisms factor through one specific link,
approximately
$T(2, k)^{[\ell(\beta)/k]}$ or $T(2, k)^{[2m/k]}$, respectively.

\begin{proof}[Proofs of Theorems~\ref{thm:3-braids},~\ref{thm:3-torus-knots}, and~\ref{thm:large-k}]
  We have
  $\chi(\widehat \beta) = -(\ell(\beta)-2)$ for all
  non-split positive
  braid words $\beta \in B_3^+$, and
  $\chi(T(2, k)^n) = -n(k-1)$ for all $k \geq 3$.
  Therefore,
  by the slice-Bennequin inequality~\cite{rudolph},
  we obtain
  \begin{equation}\label{eq:slice-bennequin}
    d_\chi(\widehat \beta, T(2, k)^n) \geq |\chi(T(2, k)^n) - \chi(\widehat \beta)|
    = |(k-1)n - \ell(\beta)| - 2.
  \end{equation}
  Combining this inequality with Lemma~\ref{lem:signature-bounds}
  yields a piecewise linear lower bound for
  $d_\chi(\widehat \beta, T(2, k)^n)$
  in $n$.
  A quick calculation shows
  \[
    d_\chi(\widehat \beta, T(2, k)^n) \geq \ell(\beta)/k + (k-1)|n - \ell(\beta)/k| - c_k
  \]
  for all $k \geq 3$, where $c_3 = c_4 = c_5 = 2$ and $c_6 = 4$.
  This takes care of the claimed lower bounds.


  Next, we turn to the upper bounds.
  Let $k \in \{3, 4, 5, 6\}$ and $\beta \in B_3^+$ be a non-split
  positive braid. If $k \geq 4$, we assume in addition that
  $\widehat \beta$ is a torus link $T(3, m)$,
  for which we have $\ell(\beta) = 2m$.
  Let $\widetilde C_3 = 10$, $\widetilde C_4 = 6$, $\widetilde C_5 = 19$, $\widetilde C_6 = 20$.
  Recall that we have shown
  in Section~\ref{sec:finding-subsurfaces}
  for all
  $N \geq 0 $ such that $kN \leq \ell(\beta) - \widetilde C_k$,
  that
  \begin{equation}\label{eq:small-N}
    d_\chi(\widehat \beta, T(2, k)^N) \leq \ell(\beta) - (k-1)N + 2.
  \end{equation}
  Let $N$ be the maximal integer satisfying this restriction,
  and let $n \geq N$ be arbitrary.
  We use the triangle inequality to derive the upper bound in question.
  It is easy to see that
  \(
    d_\chi(T(2, k)^N, T(2, k)^n) = (k-1)(n-N).
  \)
  We thus obtain
  \begin{align*}
    d_\chi(\widehat \beta, T(2, k)^n)
    & \leq d_\chi(\widehat \beta, T(2, k)^N) + d_\chi(T(2, k)^N, T(2, k)^n) \\
    & \leq \ell(\beta) - (k-1)N + 2 + (k-1)(n-N) \\
    &= \ell(\beta) + (k-1)n + 2 - 2(k-1)N.
  \end{align*}
  We have \(kN \geq \ell(\beta) - (\widetilde C_k + k-1)\)
  by maximality of $N$.
  This yields
  \begin{align*}
    d_\chi(\widehat \beta, T(2, k)^n)
    &\leq
   (k-1)n - \frac{(k-2)\ell(\beta)}{k} + 2(k-1)\frac{\widetilde C_k + k - 1}{k} + 2.
  \end{align*}
  Combining~(\ref{eq:small-N}) with this inequality
  proves Theorems~\ref{thm:3-braids} and~\ref{thm:3-torus-knots} for
  the constants $C_k = 2(k-1)(\widetilde C_k + k - 1)/k + 2$.
  We obtain the following explicit constants:
  \begin{enumerate}[(i)]
    \item $C_3 = 18$,
    \item $C_4 = 31/2$,
    \item $C_5 = 194/5$,
    \item $C_6 = 131/3$. \qedhere
  \end{enumerate}
\end{proof}

\begin{proof}[Proof of Theorem~\ref{thm:large-k}]
  The lower bounds come from
  Lemma~\ref{lem:signature-bounds} and Equation~(\ref{eq:slice-bennequin})
  (where we apply $m \geq n/3$).
  The upper bound for point (ii) was done in Section~\ref{sec:finding-subsurfaces},
  so only the upper bound for point (i) is missing.
  To this end,
  we describe a two-step process to construct
  a cobordism from $T(3,m)$ to $T(2, k)^n$.
  Let $n \geq m/3$.
  First, a cobordism of Euler characteristic
  $-(k-6)n$ transforms $T(2, k)^n$ into $T(2,6)^n$.
  Next, by Theorem~\ref{thm:3-torus-knots} we know that
  \[
    d_{\chi}(T(3, m), T(2, 6)^n) \leq 5n - 4m/3 + E_k(m, n).
  \]
  Putting the two together we obtain the claimed bound
  \begin{align*}
    d_\chi(T(3, m), T(2, k)^n)
    &\leq d_\chi(T(3,m), T(2, 6)^n) + d_\chi(T(2, 6)^{n}, T(2, k)^n) \\
    &\leq (k-1)n - 4m/3 + E_k(m, n),
  \end{align*}
  where $|E_k(m,n)| \leq |E_6(m,n)| \leq C_6$.
\end{proof}

\bibliographystyle{amsalpha}
\bibliography{biblio}

\end{document}